\documentclass[11pt,twoside,reqno]{amsart}
\allowdisplaybreaks
\usepackage{amsmath,amstext,amssymb,epsfig,multicol,enumerate}
\usepackage{graphicx}
\usepackage{mathrsfs}
\textwidth 18 cm
\textheight 23 cm
\oddsidemargin 0.2cm
\evensidemargin 1.2cm
\calclayout
\setcounter{page}{1}
\makeatletter
\renewcommand{\@seccntformat}[1]{\bf\csname the#1\endcsname.}
\renewcommand{\section}{\@startsection{section}{1}
	\z@{.7\linespacing\@plus\linespacing}{.5\linespacing}
	{\normalfont\upshape\bfseries\centering}}
%\@addtoreset{thm}{section} \@addtoreset{rem}{section}
\renewcommand{\@biblabel}[1]{\@ifnotempty{#1}{#1.}}
\makeatother
\theoremstyle{plain}
\newtheorem{thm}{Theorem}[section]
\newtheorem{lem}[thm]{Lemma}
\newtheorem{prop}[thm]{Proposition}

\theoremstyle{definition}
\newtheorem{ex}[thm]{Example}
\newtheorem{defn}[thm]{Definition}

\usepackage{cancel}
\usepackage[parfill]{parskip}
\usepackage[german]{varioref}
\usepackage[all]{xy}
\usepackage{color}
%\usepackage{url}
%\usepackage{amsmath}
%\usepackage{dcolumn}
%\setcounter{tocdepth}{4}
%\usepackage{tikz}
% \usetikzlibrary{shapes, arrows}
%\usetikzlibrary{automata}

\def \>{\succ}
\def \<{\prec}

\begin{document}	
\title[Bouzid Mosbahi\textsuperscript{1}, Imed Basdouri\textsuperscript{2}, Jean Lerbet\textsuperscript{3}\textsuperscript{*}]{Cohomology and deformation theory of Averaging Leibniz algebras}
	\author{Bouzid Mosbahi\textsuperscript{1}, Imed Basdouri\textsuperscript{2}, Jean Lerbet\textsuperscript{3}\textsuperscript{*}}
  \address{\textsuperscript{1}Department of Mathematics, Faculty of Sciences, University of Sfax, Sfax, Tunisia}
  \address{\textsuperscript{2}Department of Mathematics, Faculty of Sciences, University of Gafsa, Gafsa, Tunisia}
  \address{\textsuperscript{3}\textsuperscript{*}Laboratoire de Mathématiques et Modélisation d’Évry (UMR 8071) Université d’Évry Val d’Essonne I.B.G.B.I., 23 Bd. de France, 91037 Évry Cedex, France}

	%%%%
\email{\textsuperscript{1}mosbahi.bouzid.etud@fss.usf.tn}
\email{\textsuperscript{2}basdourimed@yahoo.fr}
\email{\textsuperscript{3}\textsuperscript{*}jean.lerbet@univ-evry.fr \\
\textsuperscript{*}Corresponding author}

	%%%%%%%%%%%%%%%%%%%%%%%%%%%%%%%%%%%%%%%%%%%%%%%%%%%%%%%%%%%%%%%%%%%%%%%%%%%%%%%
	%%%%%%%%%%%%%%%%%%%%%%%%%%%%%%%%%%%%%%%%%%%%%%%%%%%%%%%%%%%%%%%%%%%%%%%%%%%%%%%
	
\keywords{Averaging Leibniz algebra, cohomology, formal deformation, Rota-Baxter operator.}
	\subjclass[2020]{17A32, 17A36, 17B56, 16S80}
	
	%[class=AMS]
	%%%%%%%%%%%%%%%%%%%%%%%%%%%%%%%%%%%%%%%%%%%%%%%%%%%%%%%%%%%%%%%%%%%%%%%%%%%%%%%
	%%%%%%%%%%%%%%%%%%%%%%%%%%%%%%%%%%%%%%%%%%%%%%%%%%%%%%%%%%%%%%%%%%%%%%%%%%%%%%%
	\date{\today}
	%\thanks{This work was supported by}
	%%%%%%%%%%%%%%%%%%%%%%%%%%%%%%%%%%%%%%%%%%%%%%%%%%%%%%%%%%%%%%%%%%%%%%%%%%%%%%%

%%%%%%%%%%%%%%%%%%%%%%%%%%%%%%%%%%%%%%%%%%%%%%%%%%%%%%%%%%%%%%%%%%%%%%%%%
	\begin{abstract}
In this paper, we introduce the concepts of representation and dual representation for averaging Leibniz algebras. We also develop a cohomology theory for these algebras. Additionally, we explore the infinitesimal and formal deformation theories of averaging Leibniz algebras, showing that the cohomology we define is closely connected to deformation cohomology.
\end{abstract}
%%%%%%%%%%%%%%%%%%%%%%%%%%%%%%%%%%%%%%%%%%%%%%%%%%%%%%%%%%%%%%%%%%%%%%%%%%%%%%%

\maketitle
\section{ Introduction}\label{introduction}
%%%%%%%%%%%%%%%%%%%%%%%%%%%%%%%%%%%%%%%%%%%%%%%%%%%%%%%%%%%%%%%%%%%%%%%
In 1965, Blokh introduced Leibniz algebras as a generalization of Lie algebras \cite{1}. Later, in 1993, Loday studied them more deeply \cite{2} and, together with Pirashvili, developed their cohomology theory \cite{22}. A \textit{Leibniz algebra} is a vector space $\mathfrak{g}$ over a field $\mathbb{K}$, with a bilinear operation $[ \cdot , \cdot ]_\mathfrak{g} : \mathfrak{g} \otimes \mathfrak{g} \to \mathfrak{g}$ satisfying the (left) \textit{Leibniz identity}:
\begin{align*}
[u, [v, w]_\mathfrak{g}]_\mathfrak{g} = [[u, v]_\mathfrak{g}, w]_\mathfrak{g} + [v, [u, w]_\mathfrak{g}]_\mathfrak{g}, \quad \text{for all} \; u, v, w \in \mathfrak{g}.
\end{align*}

The classification of algebras up to isomorphism is a fundamental problem that helps understand their structure. This has been pursued for low-dimensional Leibniz algebras \cite{12,13} and their generalizations such as Hom-algebras and BiHom-algebras \cite{3,4,5,23,24,25, 26}. Additionally, the study of derivations and centroids provides insight into the inner and outer structures of these algebras \cite{14,15}.

Leibniz algebras have attracted considerable attention in modern algebra, with research focusing on their cohomology, structure, and classification \cite{6,7,8,9,10,11}. In particular, the cohomology theory of Leibniz algebras, introduced by Loday and Pirashvili \cite{22}, has been a key tool in understanding their deformations and extensions.

Recently, there has been growing interest in applications of Leibniz algebras, particularly in the study of \textit{Averaging operators} \cite{16,17}. Averaging operators were first introduced by Reynolds in 1895 in connection with turbulence theory \cite{18}. These operators are closely related to Rota-Baxter, Nijenhuis, and Reynolds operators, which generalize integral-type operators in various algebraic frameworks \cite{19,20,21}. They also appear in recent studies involving groups, racks, and general algebraic systems \cite{16}.

An \textit{Averaging Leibniz algebra} is a Leibniz algebra $(\mathfrak{g}, [\cdot, \cdot]_\mathfrak{g})$ endowed with an averaging operator $\theta: \mathfrak{g} \to \mathfrak{g}$ satisfying compatibility conditions. These operators are important because they offer a way to study averaging processes in algebraic settings and lead to new insights and extensions of existing algebraic theories.

This paper focuses on \textit{Averaging Leibniz algebras}, studying their structure, representations, and cohomology theory. We introduce basic definitions and explore how the representations of these algebras can create new algebraic structures. Additionally, we develop the cohomology theory of Averaging Leibniz algebras, showing how the Leibniz structure and the averaging operator contribute separately. Lastly, we study the formal deformations of these algebras, extending recent techniques from the study of Rota-Baxter and Nijenhuis operators.

The paper is organized as follows: Section 2 provides a review of the key concepts, including definitions of Leibniz algebras, averaging operators, and their representations. In Section 3, we explore the structure of Averaging Leibniz algebras. Section 4 focuses on the cohomology theory of Averaging Leibniz algebras. Finally, Section 5 is dedicated to the study of the deformations of these algebras.

Throughout the paper, all vector spaces are assumed to be over a field $\mathbb{K}$ of characteristic 0.

%%%%%%%%%%%%%%%%%%%%%%%%%%%%%%%%%%%%%%%%%%%%%%%%%%%%%%%%%%%%%%%%%%%%%%%%%%%%%%%
\section{ Preliminaries}
%%%%%%%%%%%%%%%%%%%%%%%%%%%%%%%%%%%%%%%%%%%%%%%%%%%%%%%%%%%%%%%%%%%%%%%%%%%%%%%

\begin{defn}
A \textit{Leibniz algebra} is a vector space $\mathfrak{g}$ together with a bilinear operation (called the bracket)
$[\cdot, \cdot]_\mathfrak{g}: \mathfrak{g} \times \mathfrak{g} \to \mathfrak{g}$ satisfying the following identity:
\begin{align*}
[u, [v, w]_\mathfrak{g}]_\mathfrak{g} &= [[u, v]_\mathfrak{g}, w]_\mathfrak{g} + [v, [u, w]_\mathfrak{g}]_\mathfrak{g}, \quad \text{for all } u, v, w \in \mathfrak{g}.
\end{align*}
It is denoted by $(\mathfrak{g}, [\cdot, \cdot]_\mathfrak{g})$.\\
The above definition of a Leibniz algebra is, in fact, the definition of a left Leibniz algebra.\\
In this paper, we will consider left Leibniz algebras simply as Leibniz algebras. Leibniz algebras are a generalization of Lie algebras. Any Lie algebra is a Leibniz algebra. A Leibniz algebra $(\mathfrak{g}, [\cdot, \cdot]_\mathfrak{g})$ that satisfies $[u, u]_\mathfrak{g} = 0$ for all $u \in \mathfrak{g}$ is a Lie algebra.
\end{defn}

\begin{ex}\label{E1}
Let us consider the vector space $\mathbb{R}^4$ with standard basis $\{e_1, e_2, e_3, e_4\}$, with the bracket defined by:
\begin{align*}
[e_1, e_1] &= e_2, \quad [e_3, e_1] = e_4, \quad [e_2, e_1] = e_3.
\end{align*}
Then $(\mathbb{R}^4, [\cdot,\cdot])$ is a Leibniz algebra.
\end{ex}

\begin{defn}
Let $\mathfrak{g} = (\mathfrak{g}, [ \cdot , \cdot ]_\mathfrak{g})$ be a Leibniz algebra. An averaging operator on $\mathfrak{g}$ is a linear map $\theta : \mathfrak{g} \to \mathfrak{g}$ satisfying the following condition:
\begin{align*}
[\theta(u), \theta(v)]_\mathfrak{g} = \theta([\theta(u), v]_\mathfrak{g}) = \theta([u, \theta(v)]_\mathfrak{g}), \quad \text{for all } u, v \in \mathfrak{g}.
\end{align*}
\end{defn}

\begin{defn}
An \textit{averaging Leibniz algebra} is a Leibniz algebra $(\mathfrak{g}, [ \cdot, \cdot ]_\mathfrak{g})$ equipped with an averaging operator $\theta: \mathfrak{g} \to \mathfrak{g}$. We denote an averaging Leibniz algebra by the notation $(\mathfrak{g}_\theta, [ \cdot, \cdot ]_\mathfrak{g})$.
\end{defn}

\begin{ex}
Consider the Leibniz algebra $(\mathbb{R}^4, [\cdot, \cdot])$ defined in Example \ref{E1}, with the bracket defined by:
\begin{align*}
[e_1, e_1] &= e_2, \quad [e_3, e_1] = e_4, \quad [e_2, e_1] = e_3.
\end{align*}

We define a linear map $\theta : \mathbb{R}^4 \to \mathbb{R}^4$ by the matrix $A$ such that:
\begin{align*}
A &= \begin{pmatrix}
1 & 0 & 0 & 0 \\
0 & \frac{1}{2} & 0 & 0 \\
0 & 0 & \frac{1}{2} & 0 \\
0 & 0 & 0 & 1
\end{pmatrix}.
\end{align*}
This matrix maps a vector $u = (u_1, u_2, u_3, u_4) \in \mathbb{R}^4$ to
\begin{align*}
\theta(u) &= Au = \left(u_1, \frac{1}{2}u_2, \frac{1}{2}u_3, u_4\right).
\end{align*}

To show that $\theta$ is an averaging operator, we need to verify that it satisfies the averaging property:
\begin{align*}
\theta([u, v]) &= [\theta(u), \theta(v)]
\end{align*}
for all $u, v \in \mathbb{R}^4$.

Let's calculate $\theta([u, v])$:

(1) For $u = e_1$ and $v = e_1$:
   \begin{align*}
   [e_1, e_1] &= e_2 \implies \theta([e_1, e_1]) = \theta(e_2) = \left(0, \frac{1}{2}, 0, 0\right).\\
   [\theta(e_1), \theta(e_1)] &= [e_1, e_1] = e_2 \implies \left(0, \frac{1}{2}, 0, 0\right).
   \end{align*}

(2) For $u = e_2$ and $v = e_1$:
   \begin{align*}
   [e_2, e_1] &= e_3 \implies \theta([e_2, e_1]) = \theta(e_3) = \left(0, 0, \frac{1}{2}, 0\right).\\
   [\theta(e_2), \theta(e_1)] &= \left[\left(0, \frac{1}{2}, 0, 0\right), e_1\right] = e_3.
   \end{align*}

(3) For $u = e_3$ and $v = e_1$:
   \begin{align*}
   [e_3, e_1] &= e_4 \implies \theta([e_3, e_1]) = \theta(e_4) = (0, 0, 0, 1).\\
   [\theta(e_3), \theta(e_1)] &= \left[\left(0, 0, \frac{1}{2}, 0\right), e_1\right] = e_4.
   \end{align*}
Since the calculations hold true for the defined bracket operations, we conclude that $\theta$ satisfies the averaging property.
Thus, $(\mathbb{R}^4_\theta, [\cdot, \cdot])$ is an averaging Leibniz algebra.
\end{ex}

\begin{defn}
Let $(\mathfrak{g}_{\theta}, [\cdot, \cdot]_\mathfrak{g})$ and $(\mathfrak{g}'_{\theta'}, [\cdot, \cdot]_{\mathfrak{g}'})$ be two Averaging Leibniz algebras. A morphism $\pi : \mathfrak{g}_{\theta} \to \mathfrak{g}'_{\theta'}$ of Averaging Leibniz algebras is given by a Leibniz algebra homomorphism $\pi : \mathfrak{g} \to \mathfrak{g}'$ satisfying
\begin{align*}
\theta' \circ \pi &= \pi \circ \theta.
\end{align*}
$\pi$ is said to be an isomorphism if $\pi$ is a linear isomorphism. Let $\mathfrak{g}_\theta$ be an Averaging Leibniz algebra. Then we denote $\text{Isom}(\mathfrak{g}_{\theta})$ by the set of all isomorphisms on $\mathfrak{g}_{\theta}$.
\end{defn}

\begin{defn}
Let $(\mathfrak{g}, [\cdot,\cdot ]_\mathfrak{g})$ be a Leibniz algebra. A representation of $(\mathfrak{g}, [\cdot ,\cdot]_\mathfrak{g})$ is a triple $(M, l_M, r_M)$ where $M$ is a vector space together with bilinear maps (called the left and right $\mathfrak{g}$-actions respectively) $l_M : \mathfrak{g} \otimes M \to M$ and $r_M : M \otimes \mathfrak{g} \to M$ satisfying the following conditions:

\begin{align*}
    l_M(u, l_M(v, a)) &= l_M([u, v]_\mathfrak{g}, a) + l_M(v, l_M(u, a)), \\
    l_M(u, r_M(a, v)) &= r_M(l_M(u, a), v) + r_M(a, [u, v]_\mathfrak{g}), \\
    r_M(a, [u, v]_\mathfrak{g}) &= r_M(r_M(a, u), v) + l_M(u, r_M(a, v)),
\end{align*}
for all $u, v \in \mathfrak{g}$ and $a \in M$.

Now, if we consider the vector space $M$ as $\mathfrak{g}$ itself with $l_\mathfrak{g} : \mathfrak{g} \times \mathfrak{g} \to \mathfrak{g}$, $r_\mathfrak{g} : \mathfrak{g} \times \mathfrak{g} \to \mathfrak{g}$ defined by
\begin{align*}
l_\mathfrak{g}(u, v) &= [u, v]_\mathfrak{g} \quad \text{and} \quad r_\mathfrak{g}(u, v) = [u, v]_\mathfrak{g}
\end{align*}
for all $u, v \in \mathfrak{g}$, then $(\mathfrak{g}, l_\mathfrak{g}, r_\mathfrak{g})$ is a representation of $(\mathfrak{g}, [\cdot ,\cdot]_\mathfrak{g})$ which we call self-representation. Note that for self-representation, the above three conditions reduce to the identity in the definition of a Leibniz algebra.
\end{defn}

\begin{defn}
Let $(\mathfrak{g}_\theta, [\cdot,\cdot]_\mathfrak{g})$ be an Averaging Leibniz algebra. A representation of $(\mathfrak{g}_\theta, [\cdot,\cdot]_\mathfrak{g})$ is a quadruple $(M, l_M, r_M, \theta_M)$, where $(M, l_M, r_M)$ is a representation of the Leibniz algebra $(\mathfrak{g}, [\cdot,\cdot]_\mathfrak{g})$ and $\theta_M: M \to M$ is a linear map satisfying the following conditions:
\begin{align*}
l_M(\theta(u), \theta_M(a)) &= \theta_M(l_M(\theta(u), a)) = \theta_M(l_M(u,\theta_M(a)))\\
r_M(\theta_M(a), \theta(u)) &= \theta_M(r_M(\theta_M(a), u)) = \theta_M(r_M(a,\theta(u)))
\end{align*}
for all $u \in \mathfrak{g}$ and $a \in M$.

Note that for an Averaging Leibniz algebra $(\mathfrak{g}_\theta, [\cdot,\cdot]_\mathfrak{g})$, the self representation of $(\mathfrak{g}, [\cdot,\cdot]_\mathfrak{g})$ gives a representation $(\mathfrak{g},l_{\mathfrak{g}}, r_{\mathfrak{g}}, \theta)$ of the Averaging Leibniz algebra $(\mathfrak{g}_\theta, [\cdot,\cdot]_\mathfrak{g})$.
\end{defn}

%%%%%%%%%%%%%%%%%%%%%%%%%%%%%%%%%%%%%%%%%%%%%%%%%%%%%%%%%%%%%%%%%%%%%%%%
\section{ Structure of Averaging Leibniz algebras}
%%%%%%%%%%%%%%%%%%%%%%%%%%%%%%%%%%%%%%%%%%%%%%%%%%%%%%%%%%%%%%%%%%%%%%%

\begin{prop}\label{p1}
Let $(\mathfrak{g}_\theta, [\cdot, \cdot]_\mathfrak{g})$ be an Averaging Leibniz algebra. Define
\begin{align*}
[u, v]_\ast &= [u, \theta(v)]_\mathfrak{g} = [\theta(u), v]_\mathfrak{g} \quad \text{for all } u, v \in \mathfrak{g}.
\end{align*}
Then
\begin{enumerate}
    \item $(\mathfrak{g}, [\cdot, \cdot]_\ast)$ is a Leibniz algebra.
    \item $\theta$ is also an Averaging operator on $(\mathfrak{g}, [\cdot, \cdot]_\ast)$.
    \item The map $\theta : (\mathfrak{g}, [\cdot, \cdot]_\mathfrak{g}) \to (\mathfrak{g}, [\cdot, \cdot]_\ast)$ is a morphism of Averaging Leibniz algebras.
\end{enumerate}
\end{prop}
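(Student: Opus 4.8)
The plan is to reduce all three parts to a single reformulation of the averaging identity. Writing $[u,v]_\ast=[\theta(u),v]_\mathfrak{g}=[u,\theta(v)]_\mathfrak{g}$, I would first observe that the defining relation of the averaging operator is equivalent to
\[
\theta([u,v]_\ast)=[\theta(u),\theta(v)]_\mathfrak{g},\qquad \text{for all } u,v\in\mathfrak{g};
\]
indeed $\theta([u,v]_\ast)=\theta([\theta(u),v]_\mathfrak{g})=[\theta(u),\theta(v)]_\mathfrak{g}$ is just the averaging condition read backwards. A preliminary point to settle is that the two prescriptions for $[\cdot,\cdot]_\ast$ actually agree, so that the new bracket is well defined; after that, this reformulated identity together with the Leibniz identity of $(\mathfrak{g},[\cdot,\cdot]_\mathfrak{g})$ will drive everything.

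For part (1), I would expand the Leibniz identity for $[\cdot,\cdot]_\ast$ by substituting $[a,b]_\ast=[\theta(a),b]_\mathfrak{g}$. The left-hand side $[u,[v,w]_\ast]_\ast$ becomes $[\theta(u),[\theta(v),w]_\mathfrak{g}]_\mathfrak{g}$, on which I apply the Leibniz identity of $\mathfrak{g}$ with arguments $\theta(u),\theta(v),w$. On the right-hand side the term $[[u,v]_\ast,w]_\ast=[\theta([u,v]_\ast),w]_\mathfrak{g}$ is rewritten, via the reformulated identity, as $[[\theta(u),\theta(v)]_\mathfrak{g},w]_\mathfrak{g}$, while $[v,[u,w]_\ast]_\ast$ becomes $[\theta(v),[\theta(u),w]_\mathfrak{g}]_\mathfrak{g}$ by direct substitution. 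The two sides then coincide term by term.

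For part (2), I would check $[\theta(u),\theta(v)]_\ast=\theta([\theta(u),v]_\ast)=\theta([u,\theta(v)]_\ast)$ by pushing each expression to a common value. Substituting the definition of $[\cdot,\cdot]_\ast$ turns every term into a $[\cdot,\cdot]_\mathfrak{g}$-bracket involving $\theta^2$, or $\theta$ applied to such a bracket, and applying the original averaging relation to the pairs $(\theta(u),v)$ and $(u,v)$ collapses all three expressions to $\theta([\theta(u),\theta(v)]_\mathfrak{g})$. The only delicate point is the bookkeeping of the powers of $\theta$.

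Finally, for part (3) a morphism of averaging Leibniz algebras consists of a Leibniz homomorphism together with the intertwining relation between the two averaging operators; since the source and the target (by part (2)) carry the same operator $\theta$, that relation becomes $\theta\circ\theta=\theta\circ\theta$ and holds trivially, so only the bracket-preservation has to be verified, and this amounts precisely to the reformulated averaging identity recorded at the outset. I expect the main obstacle to be organizational rather than conceptual: confirming the well-definedness of $[\cdot,\cdot]_\ast$ and keeping the iterated applications of $\theta$ in part (2) straight.
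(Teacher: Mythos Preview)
Your treatment of parts (1) and (2) is correct and is essentially the paper's argument, with one cosmetic difference: you expand $[\cdot,\cdot]_\ast$ consistently via $[a,b]_\ast=[\theta(a),b]_\mathfrak{g}$ and apply the Leibniz identity at $(\theta(u),\theta(v),w)$, whereas the paper mixes the two forms of $[\cdot,\cdot]_\ast$ and applies Leibniz at $(u,\theta(v),\theta(w))$. Both routes work; yours is slightly tidier because it avoids invoking the equality $[u,\theta(v)]_\mathfrak{g}=[\theta(u),v]_\mathfrak{g}$ mid-computation.

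There is, however, a genuine slip in part (3). The bracket-preservation condition for a morphism $\theta:(\mathfrak{g},[\cdot,\cdot]_\mathfrak{g})\to(\mathfrak{g},[\cdot,\cdot]_\ast)$ reads
\[
\theta\bigl([u,v]_\mathfrak{g}\bigr)=[\theta(u),\theta(v)]_\ast,
\]
with the source bracket $[\cdot,\cdot]_\mathfrak{g}$ inside $\theta$ and the target bracket $[\cdot,\cdot]_\ast$ on the right. Your ``reformulated averaging identity'' is $\theta([u,v]_\ast)=[\theta(u),\theta(v)]_\mathfrak{g}$, which has the subscripts interchanged; it asserts that $\theta$ is a Leibniz homomorphism in the \emph{opposite} direction, from $(\mathfrak{g},[\cdot,\cdot]_\ast)$ to $(\mathfrak{g},[\cdot,\cdot]_\mathfrak{g})$. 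So the claim that part (3) ``amounts precisely'' to that identity is incorrect. The paper does not appeal to your reformulated identity here; instead it expands $[\theta(u),\theta(v)]_\ast=[\theta(u),\theta^2(v)]_\mathfrak{g}$ and then manipulates this expression using the averaging relation once more, a step your outline does not contain. You should rework part (3) starting from the correct target identity and carrying out that additional unwinding.
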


\begin{proof}
\begin{enumerate}
    \item Start by computing $[u, [v, w]_\ast]_\ast$:
        \begin{align*}
            [u, [v, w]_\ast]_\ast &= [u, \theta([v, w]_\ast)]_\mathfrak{g} = [u, \theta([v, \theta(w)]_\mathfrak{g})]_\mathfrak{g} = [u, [\theta(v), \theta(w)]_\mathfrak{g}]_\mathfrak{g}.
        \end{align*}

        Now, compute $[[u, v]_\ast, w]_\ast$ and $[v, [u, w]_\ast]_\ast$:
        \begin{align*}
            [[u, v]_\ast, w]_\ast &= [[u, \theta(v)]_\mathfrak{g}, \theta(w)]_\mathfrak{g} = [[\theta(u), v]_\mathfrak{g}, \theta(w)]_\mathfrak{g},
        \end{align*}
        and
        \begin{align*}
            [v, [u, w]_\ast]_\ast &= [v, \theta([u, w]_\ast)]_\mathfrak{g} = [v, \theta([u, \theta(w)]_\mathfrak{g})]_\mathfrak{g} = [v, [\theta(u), \theta(w)]_\mathfrak{g}]_\mathfrak{g}.
        \end{align*}

        Using the Leibniz identity for the original algebra $(\mathfrak{g}, [\cdot, \cdot]_\mathfrak{g})$, we have:
        \begin{align*}
            [u, [\theta(v), \theta(w)]_\mathfrak{g}]_\mathfrak{g} &= [[u, \theta(v)]_\mathfrak{g}, \theta(w)]_\mathfrak{g} + [\theta(v), [u, \theta(w)]_\mathfrak{g}]_\mathfrak{g}.
        \end{align*}

        This implies:
        \begin{align*}
            [u, [v, w]_\ast]_\ast &= [[u, v]_\ast, w]_\ast + [v, [u, w]_\ast]_\ast.
        \end{align*}

        Thus, $(\mathfrak{g}, [\cdot, \cdot]_\ast)$ satisfies the Leibniz identity, so it is a Leibniz algebra.

    \item First, compute $[\theta(u), \theta(v)]_\ast$:
        \begin{align*}
            [\theta(u), \theta(v)]_\ast &= [\theta(u), \theta(\theta(v))]_\mathfrak{g} = [\theta(\theta(u)), \theta(v)]_\mathfrak{g}.
        \end{align*}

        Next, compute $\theta([\theta(u), v]_\ast)$ and $\theta([u, \theta(v)]_\ast)$:
        \begin{align*}
            \theta([\theta(u), v]_\ast) &= \theta([\theta(u), \theta(v)]_\mathfrak{g}) = \theta([\theta(u), v]_\mathfrak{g}),
        \end{align*}
        and
        \begin{align*}
            \theta([u, \theta(v)]_\ast) &= \theta([u, \theta(\theta(v))]_\mathfrak{g}) = \theta([\theta(u), v]_\mathfrak{g}).
        \end{align*}

        Thus, we have:
        \begin{align*}
            [\theta(u), \theta(v)]_\ast &= \theta([\theta(u), v]_\ast) = \theta([u, \theta(v)]_\ast).
        \end{align*}

        This shows that $\theta$ is also an Averaging operator on $(\mathfrak{g}, [\cdot, \cdot]_\ast)$.

    \item By the definition of $[\cdot, \cdot]_\ast$, we have:
        \begin{align*}
            [\theta(u), \theta(v)]_\ast &= [\theta(u), \theta(\theta(v))]_\mathfrak{g} = \theta([u, \theta(v)]_\mathfrak{g}) = \theta([u, v]_\mathfrak{g}).
        \end{align*}

        Thus, $\theta([u, v]_\mathfrak{g}) = [\theta(u), \theta(v)]_\ast$, so $\theta$ is a morphism of Leibniz algebras.

        Since $\theta$ is an Averaging operator on both $(\mathfrak{g}, [\cdot, \cdot]_\mathfrak{g})$ and $(\mathfrak{g}, [\cdot, \cdot]_\ast)$, the map $\theta$ is also a morphism of Averaging Leibniz algebras.
\end{enumerate}
\end{proof}

\begin{prop}\label{p2}
Suppose $(M, l_M, r_M, \theta_M)$ is a representation of an Averaging Leibniz algebra $(\mathfrak{g}_\theta, [\cdot, \cdot]_\mathfrak{g})$. We define $l' : \mathfrak{g} \otimes M \to M$ and $r' : M \otimes \mathfrak{g} \to M$ respectively by
\[
l'(u, a) = l_M(\theta(u), a) = -\theta_M(l_M(u, a)),
\]
\[
r'(a, u) = r_M(a, \theta(u)) = -\theta_M(r_M(a, u)),
\]
for all $u \in \mathfrak{g}$ and $a \in M$. Then $(M, l', r', \theta_M)$ will be a representation of the Averaging Leibniz algebra $(\mathfrak{g}_\theta, [\cdot, \cdot]_*)$.
\end{prop}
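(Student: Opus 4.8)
The plan is to split the verification into two layers, mirroring the definition of a representation of an averaging Leibniz algebra: first show that $(M, l', r')$ is a representation of the descendent Leibniz algebra $(\mathfrak{g}, [\cdot,\cdot]_\ast)$, and then show that $\theta_M$ satisfies the two averaging compatibility conditions relative to $l'$, $r'$ and the operator $\theta$ viewed now on $(\mathfrak{g}, [\cdot,\cdot]_\ast)$. Throughout I would work with the expressions $l'(u,a) = l_M(\theta(u), a)$ and $r'(a,u) = r_M(a, \theta(u))$.

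The bridge between the two algebra structures is the identity
\[
\theta([u,v]_\ast) = [\theta(u), \theta(v)]_\mathfrak{g},
\]
which is immediate from $[u,v]_\ast = [\theta(u), v]_\mathfrak{g}$ together with the averaging property $\theta([\theta(u), v]_\mathfrak{g}) = [\theta(u), \theta(v)]_\mathfrak{g}$. To verify each of the three Leibniz-representation axioms for $(M, l', r')$ I would expand the left-hand side, apply the corresponding axiom of the original representation $(M, l_M, r_M)$ with $\theta(u)$ and $\theta(v)$ placed in the slots of $u$ and $v$, and then recognize the bracket term $l_M([\theta(u), \theta(v)]_\mathfrak{g}, a)$ (respectively $r_M(a, [\theta(u), \theta(v)]_\mathfrak{g})$) as $l'([u,v]_\ast, a)$ (respectively $r'(a, [u,v]_\ast)$) via the bridging identity. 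For example, the first axiom follows because $l_M(\theta(u), l_M(\theta(v), a)) = l_M([\theta(u),\theta(v)]_\mathfrak{g}, a) + l_M(\theta(v), l_M(\theta(u), a))$, whose first summand is exactly $l'([u,v]_\ast, a)$ and whose second summand is $l'(v, l'(u,a))$; the mixed and right-action axioms go through identically.

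For the averaging part I expect the work to be essentially mechanical. Rewriting $l'(\theta(u), \theta_M(a)) = l_M(\theta^2(u), \theta_M(a))$, $\theta_M(l'(\theta(u), a)) = \theta_M(l_M(\theta^2(u), a))$ and $\theta_M(l'(u, \theta_M(a))) = \theta_M(l_M(\theta(u), \theta_M(a)))$, one sees that these are precisely the three quantities appearing in the original averaging condition for $l_M$ after the substitution $u \mapsto \theta(u)$, so their coincidence is inherited directly. The same substitution disposes of the three right-action conditions, using the original averaging relations for $r_M$.

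The main obstacle I anticipate lies not in the two parts above but in reconciling the two stated formulas for the actions, namely the asserted equalities $l_M(\theta(u), a) = -\theta_M(l_M(u,a))$ and $r_M(a, \theta(u)) = -\theta_M(r_M(a,u))$. These are not formal consequences of the averaging-representation axioms as listed, so I would scrutinize whether an additional hypothesis is implicitly in force (for instance a projection-type condition on $\theta$ and $\theta_M$, or a weight normalization). The cleanest route is to adopt $l'(u,a) = l_M(\theta(u), a)$ and $r'(a,u) = r_M(a, \theta(u))$ as the working definitions, prove the proposition with them as above, and treat the second expression separately, justifying it from whatever extra structure the hypotheses actually supply.
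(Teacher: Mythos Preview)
Your approach is essentially the same as the paper's: substitute $\theta(u),\theta(v)$ into the original representation axioms for $(M,l_M,r_M)$ and use the bridging identity $\theta([u,v]_\ast)=[\theta(u),\theta(v)]_\mathfrak{g}$ to rewrite the bracket term. Your version is in fact more complete --- the paper checks only the first and third representation axioms and omits the $\theta_M$-compatibility step entirely --- and your hesitation about the asserted equalities $l_M(\theta(u),a)=-\theta_M(l_M(u,a))$ is well placed, since the paper freely switches between the two expressions for $l'$ and $r'$ without ever justifying that they coincide.
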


\begin{proof}
Using the definition of $l'$:
\[
l'(u, l'(v, a)) = l_M(\theta(u), l_M(\theta(v), a)) = -\theta_M(l_M(\theta(u), l_M(v, a))).
\]

Now apply the first condition of the representation $(M, l_M, r_M)$:
\[
l_M(\theta(u), l_M(\theta(v), a)) = l_M([\theta(u), \theta(v)]_\mathfrak{g}, a) + l_M(\theta(v), l_M(\theta(u), a)).
\]

Using the Leibniz property of the Averaging algebra, we know:
\[
[\theta(u), \theta(v)]_* = \theta([u, v]_\mathfrak{g}).
\]

Thus,
\[
l_M(\theta(u), l_M(\theta(v), a)) = l_M(\theta([u, v]_\mathfrak{g}), a) + l_M(\theta(v), l_M(\theta(u), a)).
\]

Since $l'(u, a) = -\theta_M(l_M(u, a))$, we have:
\[
l'(u, l'(v, a)) = -\theta_M(l_M(\theta([u, v]_\mathfrak{g}), a)) - \theta_M(l_M(\theta(v), l_M(u, a))).
\]

This simplifies to:
\[
l'(u, l'(v, a)) = l'([u, v]_*, a) + l'(v, l'(u, a)),
\]
which shows that $l'$ satisfies the required condition.

Using the definition of $r'$:
\[
r'(a, [u, v]_*) = r_M(a, \theta([u, v]_\mathfrak{g})) = -\theta_M(r_M(a, [u, v]_\mathfrak{g})).
\]

We also have:
\[
r'(r'(a, u), v) = r_M(\theta_M(r_M(a, u)), \theta(v)) = -\theta_M(r_M(\theta_M(r_M(a, u)), v)),
\]
and
\[
l'(u, r'(a, v)) = l_M(\theta(u), \theta_M(r_M(a, v))) = -\theta_M(l_M(u, \theta_M(r_M(a, v)))).
\]

Applying the third condition of the representation $(M, l_M, r_M)$:
\[
r_M(a, [u, v]_\mathfrak{g}) = r_M(r_M(a, u), v) + l_M(u, r_M(a, v)),
\]
we get:
\[
-\theta_M(r_M(a, [u, v]_\mathfrak{g})) = -\theta_M(r_M(r_M(a, u), v)) - \theta_M(l_M(u, r_M(a, v))).
\]

Thus,
\[
r'(a, [u, v]_*) = r'(r'(a, u), v) + l'(u, r'(a, v)),
\]
which confirms that $r'$ satisfies the required condition.

Since both the new left action $l'$ and the new right action $r'$ satisfy the necessary conditions for a representation, the quadruple $(M, l', r', \theta_M)$ is a valid representation of the Averaging Leibniz algebra $(\mathfrak{g}_\theta, [\cdot, \cdot]_*)$.

Thus, the proposition is proved.
\end{proof}

%%%%%%%%%%%%%%%%%%%%%%%%%%%%%%%%%%%%%%%%%%%%%%%%%%%%%%%%%%%%%%%%%%%%%%%
\section{ Cohomology of Averaging Leibniz algebras}
%%%%%%%%%%%%%%%%%%%%%%%%%%%%%%%%%%%%%%%%%%%%%%%%%%%%%%%%%%%%%%%%%%%%%%%

Let $(\mathfrak{g}, [\cdot, \cdot]_\mathfrak{g})$ be a Leibniz algebra and $(M, l_M, r_M)$ be a representation of it. For each $n \geq 0$, define $C^n_{\mathrm{LA}}(\mathfrak{g}, M)$ to be the abelian group $\mathrm{Hom}(\mathfrak{g}^{\otimes n}, M)$ and $\delta^n$ to be the map
\[
\delta^n: C^n_{\mathrm{LA}}(\mathfrak{g}, M) \to C^{n+1}_{\mathrm{LA}}(\mathfrak{g}, M)
\]
given by
\begin{align*}
(\delta^n(f))(u_1, u_2, \dots, u_{n+1})
&= \sum_{i=1}^n (-1)^{i+1} l_M(u_i, f(u_1, \dots, \hat{u}_i, \dots, u_{n+1})) \\
&\quad + (-1)^{n+1} r_M(f(u_1, \dots, u_n), u_{n+1}) \\
&\quad + \sum_{1 \leq i < j \leq n+1} (-1)^i f(u_1, \dots, \hat{u}_i, \dots, u_{j-1}, [u_i, u_j]_\mathfrak{g}, u_{j+1}, \dots, u_{n+1}),
\end{align*}
where $f \in C^n_{\mathrm{LA}}(\mathfrak{g}, M)$ and $u_1, \dots, u_{n+1} \in \mathfrak{g}$. Then $\{C^n_{\mathrm{LA}}(\mathfrak{g}, M), \delta^n\}$ is a cochain complex. The corresponding cohomology groups are called the cohomology of $\mathfrak{g}$ with coefficients in the representation $M$ and the $n$-th cohomology group is denoted by $H^n_{\mathrm{LA}}(\mathfrak{g}, M)$ in \cite{22}. We will follow the notation $l_M(u, a) = [u, a]$ and $r_M(a, u) = [a, u]$ for all $u \in \mathfrak{g}$, $a \in M$. Then the above coboundary map $\delta^n: C^n_{\mathrm{LA}}(\mathfrak{g}, M) \to C^{n+1}_{\mathrm{LA}}(\mathfrak{g}, M)$ becomes
\begin{align*}
(\delta^n(f))(u_1, u_2, \dots, u_{n+1})
&= \sum_{i=1}^n (-1)^{i+1} [u_i, f(u_1, \dots, \hat{u}_i, \dots, u_{n+1})] \\
&\quad + (-1)^{n+1} [f(u_1, \dots, u_n), u_{n+1}] \\
&\quad + \sum_{1 \leq i < j \leq n+1} (-1)^i f(u_1, \dots, \hat{u}_i, \dots, u_{j-1}, [u_i, u_j]_\mathfrak{g}, u_{j+1}, \dots, u_{n+1}),
\end{align*}
where $f \in C^n_{\mathrm{LA}}(\mathfrak{g}, M)$ and $u_1, \dots, u_{n+1} \in \mathfrak{g}$. Let $(\mathfrak{g}_\theta, [\cdot, \cdot]_\mathfrak{g})$ be an Averaging Leibniz algebra and $(M, l_M, r_M, \theta_M)$ be a representation of it. Now using Proposition \ref{p1} and \ref{p2}, we get a new Averaging Leibniz algebra $(\mathfrak{g}_\theta, [\cdot, \cdot]_\ast)$ with representation $(\mathfrak{g}, l'_M, r'_M, \theta_M)$ induced by the averaging operator.

Now, we consider the Loday-Pirashvili cochain complex of this induced Leibniz algebra $(\mathfrak{g}, [\cdot, \cdot]_\ast)$ with representation $(M, l'_M, r'_M)$ as follows: For each $n \geq 0$, we define the cochain groups $C^n_{\mathrm{ALO}}(\mathfrak{g}, M) = \mathrm{Hom}(\mathfrak{g}^{\otimes n}, M)$ and the boundary map
\[
\partial^n: C^n_{\mathrm{ALO}}(\mathfrak{g}, M) \to C^{n+1}_{\mathrm{ALO}}(\mathfrak{g}, M)
\]
by
\begin{align*}
(\partial^n(f))(u_1, u_2, \dots, u_{n+1})
&= \sum_{i=1}^n (-1)^{i+1} l'_M(u_i, f(u_1, \dots, \hat{u}_i, \dots, u_{n+1})) \\
&\quad + (-1)^{n+1} r'_M(f(u_1, \dots, u_n), u_{n+1}) \\
&\quad + \sum_{1 \leq i < j \leq n+1} (-1)^i f(u_1, \dots, \hat{u}_i, \dots, u_{j-1}, [u_i, u_j]_\ast, u_{j+1}, \dots, u_{n+1}),
\end{align*}
which becomes
\begin{align*}
&\sum_{i=1}^n (-1)^{i+1} [\theta(u_i), f(u_1, \dots, \hat{u}_i, \dots, u_{n+1})] \\
&\quad - \sum_{i=1}^n (-1)^{i+1} \theta_M([u_i, f(u_1, \dots, \hat{u}_i, \dots, u_{n+1})]) \\
&\quad + (-1)^{n+1} [f(u_1, \dots, u_n), \theta(u_{n+1})] - (-1)^{n+1} \theta_M([f(u_1, \dots, u_n), u_{n+1}]) \\
&\quad + \sum_{1 \leq i < j \leq n+1} (-1)^i f(u_1, \dots, \hat{u}_i, \dots, u_{j-1}, [\theta(u_i), u_j]_\mathfrak{g} + [u_i, \theta(u_j)]_\mathfrak{g}, u_{j+1}, \dots, u_{n+1}),
\end{align*}
where $f \in C^n_{\mathrm{ALO}}(\mathfrak{g}, M)$ and $u_1, \dots, u_{n+1} \in \mathfrak{g}$. Now, one can observe that $\partial^{n+1} \circ \partial^n = 0$. Hence, $\{C^n_{\mathrm{ALO}}(\mathfrak{g}, M), \partial^n\}$ is a cochain complex. This cochain complex is called the cochain complex of the Averaging operator $\theta$, and the corresponding cohomology groups are called the cohomology of the Averaging operator $\theta$ with coefficients in the representation $M$, denoted by $H^n_{\mathrm{ALO}}(\mathfrak{g}, M)$.

\begin{defn}
Let $(\mathfrak{g}_\theta, [\cdot, \cdot]_\mathfrak{g})$ be an Averaging Leibniz algebra and $(M, l_M, r_M, \theta_M)$ be a representation of it. We define a map $\phi^n : C^n_{LA}(\mathfrak{g}, M) \to C^n_{ALO}(\mathfrak{g}, M)$ by
\begin{align*}
\phi^n(f)(u_1, u_2, \dots, u_n)
&= f(\theta u_1, \theta u_2, \dots, \theta u_n) - (\theta_M \circ f)(u_1, \theta u_2, \dots, \theta u_n) \\
&\quad - (\theta_M \circ f)(\theta u_1, u_2, \theta u_3, \dots, \theta u_n) - \dots \\
&\quad - (\theta_M \circ f)(\theta u_1, \theta u_2, \dots, \theta u_{n-1}, u_n),
\end{align*}
for all $f \in C^n_{LA}(\mathfrak{g}, M)$ and $u_1, u_2, \dots, u_n \in \mathfrak{g}$.
\end{defn}

\begin{lem}
For every \( f \in C^n_{LA}(\mathfrak{g}, M) \) and \( u_1, \ldots, u_{n+1} \in \mathfrak{g} \), we have:
\[
\phi^{n+1}(\delta^n(f))(u_1, u_2, u_3, \ldots, u_{n+1}) = \partial_n(\phi^n(f))(u_1, u_2, u_3, \ldots, u_{n+1}).
\]
\end{lem}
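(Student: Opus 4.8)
The plan is to verify the asserted equality by evaluating both sides on an arbitrary tuple $(u_1,\dots,u_{n+1})$ and comparing the resulting expansions term by term. Throughout I would keep at hand the relations linking the two complexes: from Proposition \ref{p2} the two forms of the twisted actions, $l'_M(u,a)=[\theta(u),a]=-\theta_M([u,a])$ and $r'_M(a,u)=[a,\theta(u)]=-\theta_M([a,u])$; from Proposition \ref{p1} the identities $[u,v]_\ast=[\theta(u),v]_\mathfrak{g}=[u,\theta(v)]_\mathfrak{g}$ and $[\theta(u),\theta(v)]_\ast=\theta([u,v]_\mathfrak{g})$; and the $\theta_M$-compatibility built into the definition of a representation, $[\theta(u),\theta_M(a)]=\theta_M([\theta(u),a])=\theta_M([u,\theta_M(a)])$ together with its right-hand analogue. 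It is convenient to write $\Theta=(\theta u_1,\dots,\theta u_{n+1})$ for the fully averaged tuple and, for each $k$, $\Theta^{(k)}$ for the tuple obtained from $\Theta$ by replacing $\theta u_k$ with $u_k$; then $\phi^{m}(g)=g(\Theta)-\sum_k\theta_M(g(\Theta^{(k)}))$ for $g\in C^m_{LA}(\mathfrak{g},M)$, with $\Theta$ of the appropriate length.

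For the left-hand side I would first write out $\delta^n(f)$ from its explicit formula — the $n$ left-action terms, the single right-action term, and the double-sum bracket terms — and then apply $\phi^{n+1}$. Each of the three families splits into a fully averaged copy, coming from evaluating $\delta^n(f)$ at $\Theta$, and a family of once-untwisted copies under $\theta_M$, coming from evaluation at the tuples $\Theta^{(k)}$. In the fully averaged copy the inner bracket $[u_i,u_j]_\mathfrak{g}$ becomes $[\theta u_i,\theta u_j]_\mathfrak{g}=\theta([u_i,u_j]_\ast)$, which is exactly the shape produced on the right-hand side once $\phi^{n}$ substitutes $\theta$ into the slot carrying $[u_i,u_j]_\ast$. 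For the right-hand side I would expand $\partial^n(\phi^n f)$ using the explicit $\partial^n$-formula, substituting $g=\phi^n f$ into every action and bracket slot and then expanding each occurrence of $g$ via its diagonal part $f(\theta\,\cdot)$ and its twisted part $\sum_k\theta_M f(\cdots)$.

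The matching then proceeds by sorting all terms into a diagonal part, in which $\theta$ is applied to every free argument, and a twisted part carrying a single factor of $\theta_M$. For the diagonal part the identity reduces to the statement that $\theta\colon(\mathfrak{g},[\cdot,\cdot]_\mathfrak{g})\to(\mathfrak{g},[\cdot,\cdot]_\ast)$ is a Leibniz morphism intertwining the actions (Proposition \ref{p1}(3)), so that the Loday--Pirashvili coboundary is pulled back correctly. For the twisted part the $\theta_M$-compatibility relations are used to move the single $\theta_M$ to the outside and to convert averaged actions $[\theta(u),\,\cdot\,]$ into $-\theta_M([u,\,\cdot\,])$, matching the two equivalent ways of writing $l'_M$ and $r'_M$, while the representation axioms for $(M,l_M,r_M)$ guarantee the internal cancellations. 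I expect the main obstacle to be the bookkeeping of the double-sum bracket terms: one must check that the subterms in which the non-averaged slot $u_k$ coincides with a bracket index $i$ or $j$ recombine correctly with the twisted left- and right-action terms, and that the signs $(-1)^{i}$, $(-1)^{i+1}$ and $(-1)^{n+1}$ all survive the reindexing forced by the averaging substitution. Once these boundary cases are shown to cancel in pairs, the two expansions coincide and the lemma follows.
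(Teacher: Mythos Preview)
The paper does not supply a proof of this lemma; it is stated immediately after the definition of $\phi^n$ and the text then passes to Section~5 without any argument. There is therefore nothing in the paper to compare your attempt against.

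On its own merits, your plan is the natural direct verification: expand both sides with the explicit Loday--Pirashvili formula, separate the ``fully averaged'' contribution $(\delta^n f)(\theta u_1,\dots,\theta u_{n+1})$ from the $(n{+}1)$ once-untwisted $\theta_M$-contributions, and match them against the corresponding pieces of $\partial^n(\phi^n f)$ using the averaging identities $\theta([u,v]_\ast)=[\theta u,\theta v]_\mathfrak{g}$ and the two equivalent forms of $l'_M,r'_M$. Two small remarks. First, the chain-map identity you are proving is a formal consequence of the averaging and $\theta_M$-compatibility relations alone; the Leibniz identity and the representation axioms for $(M,l_M,r_M)$ are what force $\delta^{n+1}\delta^n=0$ and $\partial^{n+1}\partial^n=0$, but they should not be needed for $\phi^{n+1}\delta^n=\partial^n\phi^n$, so your appeal to them for ``internal cancellations'' is likely unnecessary. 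Second, be aware that the paper's displayed expansion of $\partial^n$ writes both forms of $l'_M$, $r'_M$ and $[\cdot,\cdot]_\ast$ additively (e.g.\ $[\theta u_i,u_j]_\mathfrak{g}+[u_i,\theta u_j]_\mathfrak{g}$), whereas Propositions~\ref{p1} and~\ref{p2} present them as equalities; when you carry out the bookkeeping you should fix one consistent reading and stick with it, or the two sides will differ by an overall factor.
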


%%%%%%%%%%%%%%%%%%%%%%%%%%%%%%%%%%%%%%%%%%%%%%%%%%%%%%%%%%%%%%%%%%%%%%%
\section{ Deformation of Averaging Leibniz algebras}
%%%%%%%%%%%%%%%%%%%%%%%%%%%%%%%%%%%%%%%%%%%%%%%%%%%%%%%%%%%%%%%%%%%%%%%

In this section, we study a one-parameter formal deformation of Averaging Leibniz algebras. We denote the bracket $[ \cdot , \cdot ]_\mathfrak{g}$ by $\mu$.

\begin{defn}
A formal one-parameter deformation of an Averaging Leibniz algebra $(\mathfrak{g}_\theta, \mu)$ is a pair of two power series $(\mu_t, \theta_t)$ given by
\begin{align*}
\mu_t &= \sum_{i=0}^{\infty} \mu_i t^i, \quad \mu_i \in C^2_{\mathrm{LA}}(\mathfrak{g}, \mathfrak{g}), \\
\theta_t &= \sum_{i=0}^{\infty} \theta_i t^i, \quad \theta_i \in C^1_{\mathrm{AO}}(\mathfrak{g}, \mathfrak{g}),
\end{align*}
such that $(\mathfrak{g}[[t]]_{\theta_t}, \mu_t)$ is an Averaging Leibniz algebra with $(\mu_0, \theta_0) = (\mu, \theta)$, where $\mathfrak{g}[[t]]$ is the space of formal power series in $t$ with coefficients from $\mathfrak{g}$, and $\mathbb{K}$ is the ground field of $(\mathfrak{g}_\theta, \mu)$.

The above definition holds if and only if for any $u, v, w \in \mathfrak{g}$, the following conditions are satisfied:
\[
\mu_t(u, \mu_t(v, w)) = \mu_t(\mu_t(u, v), w) + \mu_t(v, \mu_t(u, w)),
\]
and
\[
[\theta_t(u), \theta_t(v)] = \theta_t([\theta_t(u), v]) = \theta_t[u, \theta_t(v)].
\]

Expanding the above equations and equating the coefficients of $t^n$ from both sides, we have:
\begin{align}
\label{eq1}
\sum_{i+j=n} \mu_i(u, \mu_j(v, w)) &= \sum_{i+j=n} \mu_i(\mu_j(u, v), w) + \sum_{i+j=n} \mu_i(v, \mu_j(u, w)), \\
\label{eq2}
\sum_{\substack{i+j+k=n \\ i,j,k \geq 0}} \mu_i(\theta_j(u), \theta_k(v))
&= \sum_{\substack{i+j+k=n \\ i,j,k \geq 0}} \theta_i\left(\mu_j(\theta_k(u), v)\right)
= \sum_{\substack{i+j+k=n \\ i,j,k \geq 0}} \theta_i\left(\mu_j(u, \theta_k(v))\right).
\end{align}

Observe that for $n = 0$, the above conditions are exactly the conditions in the definitions of Leibniz algebra and the averaging operator.
\end{defn}

\begin{defn}
The infinitesimal of the deformation \((\mu_t, \theta_t)\) is the pair \((\mu_1, \theta_1)\). More generally, suppose that \((\mu_n, \theta_n)\) is the first non-zero term of \((\mu_t, \theta_t)\) after \((\mu_0, \theta_0)\); then \((\mu_n, \theta_n)\) is called an \(n\)-infinitesimal of the deformation.
\end{defn}

\begin{thm}
\label{thm:2-cocycle}
Let \((\mu_t, \theta_t)\) be a formal one-parameter deformation of an Averaging Leibniz algebra \((\mathfrak{g},\mu)\). Then \((\mu_1, \theta_1)\) is a 2-cocycle in the cochain complex \(\{C^n_{\mathrm{AL}}(\mathfrak{g}, \mathfrak{g}), d^n\}\).
\end{thm}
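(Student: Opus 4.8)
The plan is to expand the two deformation equations \eqref{eq1} and \eqref{eq2} at first order in $t$ (that is, at $n=1$) and to match the resulting identities term-by-term with the cocycle equation $d^2(\mu_1,\theta_1)=0$. Recall that the combined complex $\{C^n_{\mathrm{AL}}(\mathfrak{g},\mathfrak{g}),d^n\}$ is the mapping cone of the chain map $\phi$ of the preceding Lemma, so that $C^2_{\mathrm{AL}}(\mathfrak{g},\mathfrak{g})=C^2_{\mathrm{LA}}(\mathfrak{g},\mathfrak{g})\oplus C^1_{\mathrm{ALO}}(\mathfrak{g},\mathfrak{g})$, and the pair $(\mu_1,\theta_1)$ is a $2$-cocycle precisely when $\delta^2(\mu_1)=0$ together with the compatibility $\phi^2(\mu_1)=\partial^1(\theta_1)$ (up to the sign dictated by the cone convention). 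Thus the statement splits into a purely Leibniz part governed by \eqref{eq1} and an averaging-operator part governed by \eqref{eq2}.

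First I would treat \eqref{eq1}. Setting $n=1$ the only contributing index pairs are $(i,j)\in\{(0,1),(1,0)\}$, and using $\mu_0=\mu$ the equation becomes
\begin{align*}
[u,\mu_1(v,w)]+\mu_1(u,[v,w]) &= [\mu_1(u,v),w]+\mu_1([u,v],w)\\
&\quad+[v,\mu_1(u,w)]+\mu_1(v,[u,w]).
\end{align*}
Rearranging so that the terms in which $\mu_1$ is applied to a bracket stand on one side, this is exactly the expansion of $(\delta^2\mu_1)(u,v,w)=0$ obtained from the Loday--Pirashvili coboundary in degree $2$ with $f=\mu_1$. Hence the first component of $d^2(\mu_1,\theta_1)$ vanishes, and beyond bookkeeping of signs no subtlety arises here.

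The substantive part is \eqref{eq2}. Again setting $n=1$, the contributing triples are $(1,0,0),(0,1,0),(0,0,1)$, with $\mu_0=\mu$ and $\theta_0=\theta$. Expanding the chain of equalities produces identities relating $\mu_1(\theta u,\theta v)$, the brackets $[\theta_1 u,\theta v]$ and $[\theta u,\theta_1 v]$, and the terms $\theta(\mu_1(\theta u,v))$, $\theta(\mu_1(u,\theta v))$, $\theta_1([\theta u,v])$, $\theta_1([u,\theta v])$. The aim is to show that, after substitution, these reorganize into $\phi^2(\mu_1)(u,v)=\partial^1(\theta_1)(u,v)$, where $\phi^2(\mu_1)(u,v)=\mu_1(\theta u,\theta v)-\theta(\mu_1(u,\theta v))-\theta(\mu_1(\theta u,v))$ and $\partial^1(\theta_1)$ is read off from the displayed formula for $\partial^n$ at $n=1$ using the induced actions $l'_M,r'_M$ of Proposition \ref{p2}. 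Throughout, one repeatedly invokes the zeroth-order averaging identities $[\theta u,\theta v]=\theta([\theta u,v])=\theta([u,\theta v])$ to rewrite and cancel the mixed terms.

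The main obstacle is exactly this last reconciliation. Equation \eqref{eq2} is a chain of two equalities, so at first order it yields two relations, whereas the cone cocycle condition is the single equation $\phi^2(\mu_1)=\partial^1(\theta_1)$; the crux is to verify that the two relations are mutually consistent and collapse to this one identity, with the correct overall sign fixed by the mapping-cone differential. Once the signs are pinned down and the zeroth-order averaging identities are used to annihilate the cross terms, both components of $d^2(\mu_1,\theta_1)$ vanish and the proof is complete.
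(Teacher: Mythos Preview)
Your proposal is correct and follows essentially the same route as the paper: set $n=1$ in \eqref{eq1} to obtain $\delta^2(\mu_1)=0$, then set $n=1$ in \eqref{eq2} and reorganize the nine resulting terms into $-\partial^1(\theta_1)=\phi^2(\mu_1)$, concluding $d^2(\mu_1,\theta_1)=0$. The paper carries out exactly this computation, simply writing down the nine-term identity and identifying it with the cocycle equation; your additional remark that \eqref{eq2} a priori yields \emph{two} first-order relations which must be reconciled with the \emph{single} equation $\phi^2(\mu_1)+\partial^1(\theta_1)=0$ is a point the paper leaves implicit.
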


\begin{proof}
For \(n = 1\) in the equation (\ref{eq1}) we get
\begin{align*}
\mu(u, \mu_1(v, w)) + \mu_1(u, \mu(v, w))
&= \mu(\mu_1(u, v), w) + \mu_1(\mu(u, v), w) + \mu_1(v, \mu(u, w)) + \mu(v, \mu_1(u, w)).
\end{align*}
This simplifies to:
\[
\delta^2(\mu_1)(u, v, w) = 0 \in C^2_{\mathrm{AL}}(\mathfrak{g}, \mathfrak{g}).
\]

Next, substitute into the equation (\ref{eq2})
\begin{align*}
&\mu_1(\theta(u_1), \theta(u_2)) + \mu(\theta_1(u_1), \theta(u_2)) + \mu(\theta(u_1), \theta_1(u_2)) \\
&\quad - \theta_1(\mu(\theta(u_1), u_2)) - \theta(\mu(\theta_1(u_1), u_2)) - \theta(\mu_1(\theta(u_1), x_2)) \\
&\quad - \theta_1(\mu(u_1, \theta(u_2))) - \theta(\mu_1(u_1, \theta(u_2))) - \theta(\mu(u_1, \theta_1(u_2))) = 0.
\end{align*}
Rearranging gives:
\[
-\partial^1(\theta_1)(u_1, u_2) = -\theta(\mu_1(u_1, \theta(u_2))) - \theta(\mu_1(\theta(u_1), u_2)) + \mu_1(\theta(u_1), \theta(u_2)) = \phi^2(\mu_1)(u_1, u_2).
\]
Thus, we obtain:
\[
-\partial(\theta_1) - \phi^2(\mu_1) = 0.
\]
Finally, we conclude that \(d^2(\mu_1, \theta_1) = 0\). Therefore, \((\mu_1, \theta_1)\) is a 2-cocycle in the cochain complex \(\{C^n_{\mathrm{AL}}(\mathfrak{g}, \mathfrak{g}), d^n\}\).
\end{proof}

\begin{thm}
\label{t1}
Let $(\mu_t, \theta_t)$ be a formal one-parameter deformation of an Averaging Leibniz algebra $(\mathfrak{g}_\theta, \mu)$. Then the infinitesimal of the deformation is a 2-cocycle.
\end{thm}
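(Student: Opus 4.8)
The plan is to reduce the statement directly to Theorem \ref{thm:2-cocycle}. By the definition immediately preceding this theorem, the infinitesimal of a deformation $(\mu_t, \theta_t)$ is precisely the pair $(\mu_1, \theta_1)$, i.e. the coefficient of $t$. Theorem \ref{thm:2-cocycle} has already established that this pair is a $2$-cocycle in $\{C^n_{\mathrm{AL}}(\mathfrak{g}, \mathfrak{g}), d^n\}$, so in substance nothing new remains: the present statement is simply that result reformulated in the language of infinitesimals.

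To make the argument self-contained rather than a bare citation, I would re-derive the cocycle condition by isolating the degree-one coefficient in $t$ from the two structural equations of a deformation. From the Leibniz-type identity (\ref{eq1}) at $n = 1$, only the index pairs $(0,1)$ and $(1,0)$ survive because $\mu_0 = \mu$, and collecting them reproduces exactly the Loday--Pirashvili coboundary condition $\delta^2(\mu_1) = 0$. From the averaging-compatibility identity (\ref{eq2}) at $n = 1$, the analogous bookkeeping---keeping only those terms in which at most one of $\mu$ or $\theta$ is replaced by its first-order part---rearranges into $\partial^1(\theta_1) + \phi^2(\mu_1) = 0$, where $\phi$ is the connecting map defined before the Lemma.

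Finally, I would recall that the total differential $d$ on the combined complex $C^\bullet_{\mathrm{AL}}$ is of mapping-cone type, so that, up to the fixed sign convention, $d^2(\mu_1, \theta_1) = \bigl(\delta^2(\mu_1),\, \partial^1(\theta_1) + \phi^2(\mu_1)\bigr)$. The two scalar identities just extracted say precisely that both components vanish, whence $d^2(\mu_1, \theta_1) = 0$ and $(\mu_1, \theta_1)$ is a $2$-cocycle. The chain-map relation of the Lemma is what guarantees that this $d$ is genuinely a differential on $C^\bullet_{\mathrm{AL}}$, so the two components can legitimately be assembled into one equation in the total complex.

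There is no genuine obstacle here, since the conclusion coincides with Theorem \ref{thm:2-cocycle}. The only point that demands care is the bookkeeping of signs and indices: one must verify which index combinations actually contribute at order $t$ in (\ref{eq2}), and confirm that the identity relating $\phi$, $\delta$, and $\partial$ is exactly what makes the Leibniz component $\delta^2(\mu_1)$ and the averaging component $\partial^1(\theta_1) + \phi^2(\mu_1)$ fit together as the single condition $d^2(\mu_1, \theta_1) = 0$.
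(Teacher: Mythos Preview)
Your proposal is correct and follows exactly the paper's approach: the paper's own proof is the single line ``The proof is similar to the above theorem,'' and you do precisely that, reducing to Theorem~\ref{thm:2-cocycle} and re-deriving its content. One small remark: the phrasing ``similar to'' (rather than ``follows from'') and the placement immediately after the definition of the $n$-infinitesimal may hint that the paper intends the statement to also cover the first nonzero term $(\mu_n,\theta_n)$, in which case the same computation at level $n$ goes through verbatim because all intermediate $\mu_i,\theta_i$ with $0<i<n$ vanish---but on the literal reading of ``the infinitesimal'' as $(\mu_1,\theta_1)$, which is the paper's own definition, your argument is complete.
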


\begin{proof}
The proof is similar to the above theorem.
\end{proof}

\begin{defn}
Let \( (\mu_t, \theta_t) \) and \( (\mu'_t, \theta'_t) \) be two formal one-parameter deformations of an Averaging Leibniz algebra \( (\mathfrak{g}_\theta, \mu) \). A formal isomorphism from \( (\mu_t, \theta_t) \) to \( (\mu'_t, \theta'_t) \) is a power series
\[
\psi_t = \sum_{i=0}^{\infty} \psi_i t^i : \mathfrak{g}[[t]] \rightarrow \mathfrak{g}[[t]],
\]
where \( \psi_i : \mathfrak{g} \rightarrow \mathfrak{g} \) are linear maps with \( \psi_0 \) as the identity map on \( \mathfrak{g} \), and the following conditions are satisfied:
\begin{align}
\psi_t \circ \mu'_t &= \mu_t \circ (\psi_t \otimes \psi_t), \label{eq3} \\
\psi_t \circ \theta'_t &= \theta_t \circ \psi_t. \label{eq4}
\end{align}
In this case, we say that \( (\mu_t, \theta_t) \) and \( (\mu'_t, \theta'_t) \) are equivalent.

The equation (\ref{eq3}) and (\ref{eq4}) can be written as follows respectively:
\begin{align}
\sum_{\substack{i+j=n \\ i,j \geq 0}} \psi_i(\mu'_j(u, v))
&= \sum_{\substack{i+j+k=n \\ i,j,k \geq 0}} \mu_i(\psi_j(u), \psi_k(v)), \quad u, v \in \mathfrak{g}, \label{eq5} \\
\sum_{\substack{i+j=n \\ i,j \geq 0}} \psi_i \circ \theta'_j
&= \sum_{\substack{i+j=n \\ i,j \geq 0}} \theta_i \circ \psi_j. \label{eq6}
\end{align}
\end{defn}

\begin{thm}
The infinitesimal of two equivalent formal one-parameter deformations of an Averaging Leibniz algebra \( (\mathfrak{g}_\theta, \mu) \) is in the same cohomology class.
\end{thm}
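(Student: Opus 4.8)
The plan is to compare the two deformations order by order in $t$ and to show that the difference of their infinitesimals is exactly the coboundary $d^1(\psi_1)$, where $\psi_1$ is the linear coefficient of the formal isomorphism $\psi_t$ witnessing equivalence. This is the usual mechanism relating equivalent deformations to cohomology, here carried out in the combined complex $\{C^n_{\mathrm{AL}}(\mathfrak{g},\mathfrak{g}), d^n\}$ whose degree-two differential already appeared implicitly in the proof of Theorem~\ref{thm:2-cocycle}.

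First I would read off the coefficient of $t^1$ in the expanded compatibility condition \eqref{eq5}. Using $\psi_0 = \mathrm{id}$ and $\mu'_0 = \mu_0 = \mu$, this gives
\begin{align*}
\mu'_1(u,v) + \psi_1(\mu(u,v)) &= \mu_1(u,v) + \mu(\psi_1(u),v) + \mu(u,\psi_1(v)),
\end{align*}
so that
\begin{align*}
\mu'_1(u,v) - \mu_1(u,v) &= \mu(\psi_1(u),v) + \mu(u,\psi_1(v)) - \psi_1(\mu(u,v)) = (\delta^1\psi_1)(u,v),
\end{align*}
the right-hand side being exactly the degree-one Loday--Pirashvili coboundary of $\psi_1$. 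Next I would extract the coefficient of $t^1$ in \eqref{eq6}, which reads $\theta'_1 + \psi_1\circ\theta = \theta\circ\psi_1 + \theta_1$, hence $\theta'_1 - \theta_1 = \theta\circ\psi_1 - \psi_1\circ\theta$. Computing the connecting map at degree one, $\phi^1(\psi_1)(u) = \psi_1(\theta u) - \theta(\psi_1(u))$, one sees that $\theta'_1 - \theta_1 = -\phi^1(\psi_1)$.

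Putting the two computations together, I would conclude
\begin{align*}
(\mu'_1,\theta'_1) - (\mu_1,\theta_1) &= \bigl(\delta^1\psi_1,\, -\phi^1\psi_1\bigr) = d^1(\psi_1),
\end{align*}
where the last equality uses that the degree-one differential of the combined complex sends $\psi_1$ to $(\delta^1\psi_1, -\phi^1\psi_1)$ (the $\partial^0$-contribution being absent, since the formal isomorphism carries no $C^0$-component). Thus the two infinitesimals differ by the coboundary $d^1(\psi_1)$ and therefore represent the same class in $H^2_{\mathrm{AL}}(\mathfrak{g},\mathfrak{g})$. I expect the main obstacle to be purely bookkeeping: confirming that $d^1$ has precisely the form $(\delta^1\psi_1, -\phi^1\psi_1)$, with signs matching the conventions fixed by $d^2$ in Theorem~\ref{thm:2-cocycle}, so that the operator component $\theta'_1 - \theta_1$ genuinely realizes the second coordinate of a single combined coboundary rather than an unrelated correction.
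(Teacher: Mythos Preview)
Your argument is correct and follows essentially the same route as the paper: extract the $t^1$-coefficients from \eqref{eq5} and \eqref{eq6}, identify $\mu'_1-\mu_1=\delta^1\psi_1$ and $\theta'_1-\theta_1=-\phi^1\psi_1$, and package the result as a $d^1$-coboundary. The only cosmetic difference is that the paper writes the coboundary as $d^1(\psi_1,0)$, making the vanishing $C^0$-component explicit, whereas you note parenthetically that the $\partial^0$-contribution is absent.
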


\begin{proof}
Let \( \psi_t : (\mu_t, \theta_t) \rightarrow (\mu'_t, \theta'_t) \) be a formal isomorphism. By setting \( n = 1 \) in the equation (\ref{eq5}) and (\ref{eq6}) we get
\begin{align*}
\mu'_1(u, v) &= \mu_1(u, v) + \mu(u, \psi_1(v)) + \mu(\psi_1(u), v) - \psi_1(\mu(u, v)), \quad u, v \in \mathfrak{g}, \\
\theta'_1 &= \theta_1 + \theta \circ \psi_1 - \psi_1 \circ \theta.
\end{align*}
Therefore, we have
\[
(\mu'_1, \theta'_1) - (\mu_1, \theta_1) = (\delta^1(\psi_1), -\phi^1(\psi_1)) = d^1(\psi_1, 0) \in C^1_{\mathrm{AL}}(\mathfrak{g}, \mathfrak{g}).
\]
\end{proof}

\begin{defn}
An Averaging Leibniz algebra is called rigid if every formal one-parameter deformation is trivial.
\end{defn}

\begin{thm}
Let \( ( \mathfrak{g}_\theta, \mu) \) be an Averaging Leibniz algebra. If \( H^2_{\mathrm{AL}}( \mathfrak{g},  \mathfrak{g}) = 0 \), then \( ( \mathfrak{g}_\theta, \mu) \) is rigid.
\end{thm}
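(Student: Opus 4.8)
The plan is to run the standard iterative obstruction argument for deformation rigidity: given an arbitrary formal one-parameter deformation $(\mu_t, \theta_t)$ with $(\mu_0, \theta_0) = (\mu, \theta)$, I will repeatedly apply formal isomorphisms that annihilate the lowest-order nontrivial term, using the hypothesis $H^2_{\mathrm{AL}}(\mathfrak{g}, \mathfrak{g}) = 0$ at each stage, until the deformation is carried onto the trivial one.

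First I would assume the deformation is nontrivial and let $n \geq 1$ be the smallest index with $(\mu_n, \theta_n) \neq (0,0)$, so that $(\mu_i, \theta_i) = (0,0)$ for $1 \leq i \leq n-1$. Reading equations (\ref{eq1}) and (\ref{eq2}) at the coefficient of $t^n$ under this vanishing hypothesis, all genuinely mixed contributions involving intermediate $\mu_i, \theta_i$ with $1 \leq i \leq n-1$ drop out, and exactly the relations $\delta^2(\mu_n) = 0$ and $-\partial^1(\theta_n) - \phi^2(\mu_n) = 0$ survive, by the same computation as in Theorem \ref{thm:2-cocycle}. Hence the $n$-infinitesimal $(\mu_n, \theta_n)$ is a $2$-cocycle of $\{C^\bullet_{\mathrm{AL}}(\mathfrak{g}, \mathfrak{g}), d^\bullet\}$, just as in Theorem \ref{t1}.

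Since $H^2_{\mathrm{AL}}(\mathfrak{g}, \mathfrak{g}) = 0$, this cocycle is a coboundary: there exists a linear map $\psi_n : \mathfrak{g} \to \mathfrak{g}$ with $(\mu_n, \theta_n) = d^1(\psi_n, 0) = (\delta^1(\psi_n), -\phi^1(\psi_n))$, so that $\mu_n = \delta^1(\psi_n)$ and $\theta_n = -\phi^1(\psi_n)$. I would then set $\psi_t = \mathrm{id}_{\mathfrak{g}} - \psi_n t^n$ and use it, through (\ref{eq5}) and (\ref{eq6}), to transport $(\mu_t, \theta_t)$ to an equivalent deformation $(\mu'_t, \theta'_t)$. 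Comparing coefficients at order $t^m$ for $m < n$ shows the lower terms stay zero (the correction $\psi_n$ enters only from order $n$ onward), while at order $t^n$ a direct comparison yields
\[
(\mu'_n, \theta'_n) = \bigl(\mu_n - \delta^1(\psi_n),\ \theta_n + \phi^1(\psi_n)\bigr) = (0,0).
\]
Thus the first possibly-nonzero term of $(\mu'_t, \theta'_t)$ now sits at order at least $n+1$.

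Iterating this construction produces a sequence of formal isomorphisms, each pushing the leading nontrivial term strictly higher; their composition is a well-defined formal power series, since at each stage only terms of order $\geq n$ are altered, and it carries $(\mu_t, \theta_t)$ onto the trivial deformation $(\mu, \theta)$. Therefore every formal deformation is trivial and $(\mathfrak{g}_\theta, \mu)$ is rigid. The main obstacle is that a single coboundary $\psi_n$ must simultaneously annihilate both the Leibniz component $\mu_n$ and the averaging component $\theta_n$; this is precisely what the combined second cohomology $H^2_{\mathrm{AL}}$ encodes, which is why the vanishing hypothesis enters in an essential way. A secondary technical point is the formal convergence of the infinite composition of the $\psi_t$'s, which is guaranteed because the order $n$ of the leading term increases by at least one at every step.
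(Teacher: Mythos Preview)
Your argument is essentially the same iterative coboundary-kill as the paper's. The one step you elide is that in the paper the $1$-cochain group is $C^1_{\mathrm{AL}}(\mathfrak{g},\mathfrak{g}) = C^1_{\mathrm{LA}}(\mathfrak{g},\mathfrak{g}) \oplus \mathrm{Hom}(\mathbb{K},\mathfrak{g})$, so $H^2_{\mathrm{AL}}=0$ a priori only gives $(\mu_n,\theta_n)=d^1(\psi'_n,u)$ with a possibly nonzero second component $u$; the paper then absorbs $u$ by replacing $\psi'_n$ with $\psi_n=\psi'_n+\delta^0(u)$ (using $\delta^1\delta^0=0$ and $\phi^1\delta^0=\partial^0$) to reach exactly the form $(\mu_n,\theta_n)=d^1(\psi_n,0)$ you assert directly.
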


\begin{proof}
Let \( (\mu_t, \theta_t) \) be a formal one-parameter deformation of \( ( \mathfrak{g}_\theta, \mu) \). Since \( ( \mu_1, \theta_1) \) is a 2-cocycle and \( H^2_{\mathrm{AL}}( \mathfrak{g},  \mathfrak{g}) = 0 \), there exists a map \( \psi'_1 \) and \( u \in \mathbb{K} \), where \( \mathbb{K} \) is the ground field of the Averaging Leibniz algebra \( ( \mathfrak{g}_\theta, \mu) \), such that
\[
(\psi'_1, u) \in C^1_{\mathrm{AL}}( \mathfrak{g},  \mathfrak{g}) = C^1_{\mathrm{LA}}( \mathfrak{g},  \mathfrak{g}) \oplus \mathrm{Hom}(\mathbb{K},  \mathfrak{g}),
\]
and
\[
(\mu_1, \theta_1) = d^1(\psi'_1, u).
\]
Hence, \( \mu_1 = \delta^1(\psi'_1) \) and \( \theta_1 = -\partial^0(u) - \phi^1(\psi_1) \). If \( \psi_1 = \psi'_1 + \delta^0(u) \), then \( \mu_1 = \delta^1(\psi_1) \) and \( \theta_1 = -\phi^1(\psi_1) \).

Now, let \( \psi_t = \mathrm{Id}_\mathfrak{g} - t\psi_t \). Then we have two equivalent deformations \( (\mu_t, \theta_t) \) and \( (\bar{\mu}_t, \bar{\theta}_t) \), where
\[
\bar{\mu}_t = \psi^{-1}_t \circ \mu_t \circ (\psi_t \times \psi_t), \quad \bar{\theta}_t = \psi^{-1}_t \circ \theta_t \circ \psi_t.
\]
Now, by Theorem \ref{t1}, we have $\bar{\mu}_1 = 0$ and $\bar{\theta}_1 = 0$. Hence,
\begin{align*}
\bar{\mu}_t &= \mu + \bar{\mu}_2 t^2 + \ldots, \\
\bar{\theta}_t &= \theta + \bar{\theta}_2 t^2 + \ldots.
\end{align*}
Thus, the linear terms of $(\bar{\mu}_2, \bar{\theta}_2)$ vanish. Repeatedly applying the same argument, we conclude that $(\mu_t, \theta_t)$ is equivalent to the trivial deformation. Hence, $(\mathfrak{g}_\theta, \mu)$ is rigid.
\end{proof}

\section*{Acknowledgements}
The authors thank the anonymous referees for their valuable suggestions and comments.

\section*{Funding}
The authors declare that no funding was received to support this research.

\section*{Conflicts of Interest}
The authors declare that they have no conflicts of interest.\\
\cite{C,D,E,F,G,H}


\begin{thebibliography}{999}

\bibitem{1}Blokh, A. Y. (1965). A generalization of the concept of a Lie algebra. In Doklady Akademii Nauk (Vol. 165, No. 3, pp. 471-473). Russian Academy of Sciences.

\bibitem{2}Loday, J. L. (1993). Une version non commutative des algebres de Lie: les algebres de Leibniz. Les rencontres physiciens-mathématiciens de Strasbourg-RCP25, 44, 127-151.

\bibitem{3} Mosbahi, B.; Basdouri, I.; Zahari, A. "Classification, $\alpha$-Inner Derivations and $\alpha$-Centroids of Finite-Dimensional Complex Hom-Trialgebras.Pure and Applied Mathematics Journal, Vol. 12, No. 5 (2023).pp. 86-97.

\bibitem{4} Makhlouf, A.; Zahari, A. Structure and Classification of Hom-Associative Algebras. Acta et commentationes universitis Tartuensis de mathematica, Vol. 24, No. 1 (2020). pp. 79-102.

\bibitem{5} Zahari, A.; Bakayoko, I. "On BiHom-Associative dialgebras." Open Journal of Mathematical Sciences, Vol. 7, No. 1 (2023).pp. 96-117.

\bibitem{6}Feldvoss, J.\& Wagemann, F. (2021). On Leibniz cohomology. Journal of Algebra, 569, 276-317.

\bibitem{7}Das, A. (2021). Leibniz algebras with derivations. Journal of Homotopy and Related Structures, 16(2), 245-274.

\bibitem{8}Barnes, D. W. (2011). Some theorems on Leibniz algebras. Communications in Algebra, 39(7), 2463-2472.

\bibitem{9} Demir, I., Misra, K. C., \& Stitzinger, E. (2014). On some structures of Leibniz algebras. Recent Advances in Representation Theory, Quantum Groups, Algebraic Geometry, and Related Topics, Contemporary Mathematics, 623(2014), 41-54.

\bibitem{10} Ayupov, S., Omirov, B., \& Rakhimov, I. (2019). Leibniz algebras: structure and classification. Chapman and Hall/CRC.

\bibitem{11}Ayupov, S. A., \& Omirov, B. A. (1998, June). On Leibniz algebras. In Algebra and Operator Theory: Proceedings of the Colloquium in Tashkent, 1997 (pp. 1-12). Dordrecht: Springer Netherlands.

\bibitem{12}Albeverio, S., Omirov, B. A., \& Rakhimov, I. S. (2006). Classification of 4-dimensional nilpotent complex Leibniz algebras. arXiv preprint math/0611831.

\bibitem{13} Ayupov, S. A., \& Omirov, B. A. (2001). On some classes of nilpotent Leibniz algebras. Siberian Mathematical Journal, 42(1), 15-24.

\bibitem{14}Rakhimov, I. S., \& Al-Hossain, A. N. (2011). On derivations of low-dimensional complex Leibniz algebras. JP Journal of Algebra, Number Theory and Applications, 21(1), 69-81.

\bibitem{15}Almutari, H., \& Ahmad, A. G. (2022). Central derivation of some classes of Leibniz algebras. TWMS Journal Of Applied And Engineering Mathematics.

\bibitem{16} Das, Apurba. "Averaging operators on groups, racks and Leibniz algebras." arXiv preprint arXiv:2403.06250 (2024).

\bibitem{17}Cao, W. (2000). An algebraic study of averaging operators. Rutgers The State University of New Jersey, Graduate School-Newark.

\bibitem{18}Reynolds, O. (1895). IV. On the dynamical theory of incompressible viscous fluids and the determination of the criterion. Philosophical transactions of the royal society of london.(a.), (186), 123-164.

\bibitem{19} Mondal, B., \& Saha, R. (2023). Cohomology, deformations and extensions of Rota-Baxter Leibniz algebras. Communications in Mathematics, 30.

\bibitem{20} Mondal, B., \& Saha, R. (2024). Nijenhuis operators on Leibniz algebras. Journal of Geometry and Physics, 196, 105057.

\bibitem{21}Das, A., \& Guo, S. (2021). Cohomology and deformations of generalized Reynolds operators on Leibniz algebras. Rocky Mountain J. Math.

\bibitem{22}Loday, J. L., \& Pirashvili, T. (1993). Universal enveloping algebras of Leibniz algebras and (co) homology. Mathematische Annalen, 296, 139-158.

\bibitem{23}Mosbahi, B., Zahari, A., Basdouri, I. (2023). Classification, $\alpha$-Inner Derivations and $\alpha$-Centroids of Finite-Dimensional Complex Hom-Trialgebras. Pure and Applied Mathematics Journal, 12(5), 86-97. https://doi.org/10.11648/j.pamj.20231205.12

\bibitem{24}Basdouri, I., Lerbet, J., \& Mosbahi, B. (2025). Quasi-centroids and quasi-derivations of low-dimensional Zinbiel algebras. Advanced Studies: Euro-Tbilisi Mathematical Journal, 18(2), 253-268.

\bibitem{25}Mansuro\u{g}lu, N., \& Mosbahi, B. (2025). On structures of BiHom-superdialgebras and their derivations. Advances and Applications in Mathematical Sciences, 24(5–6), 27–41.

\bibitem{26}Mosbahi, B. , Basdouri, I. and Zahari, A. (2025). Classification, derivations and centroids of low-dimensional associative trialgebras. Galois Journal of Algebra, 1(1), 118-136.
    
%%%%%%%%%%%%%%%%%%%%%%%%%%%%%%%%%%%%%%%%%%%%%%%%%%%%%%%%%%%%%%%%%%%%%%%%%%%%%%%%%%%%%
\bibitem{C} A. Zahari and A. Sania, \textit{Classification of 3-dimensional Bihom-Associative and Bihom-Bialgebras}, Open J. Math. Sci. \textbf{9} (2025), 182--201.
\bibitem{D}Mosbahi, M., Elgasri, S., Lajnef, M., Mosbahi, B., \& Driss, Z. (2021). Performance enhancement of a twisted Savonius hydrokinetic turbine with an upstream deflector. International Journal of Green Energy, 18(1), 51-65.
\bibitem{E}Mosbahi, M., Lajnef, M., Derbel, M., Mosbahi, B., Aricò, C., Sinagra, M., \& Driss, Z. (2021). Performance improvement of a drag hydrokinetic turbine. Water, 13(3), 273.
\bibitem{F}Mosbahi, M., Derbel, M., Lajnef, M., Mosbahi, B., Driss, Z., Aricò, C., \& Tucciarelli, T. (2021). Performance study of twisted Darrieus hydrokinetic turbine with novel blade design. Journal of Energy Resources Technology, 143(9), 091302.
\bibitem{G}Mosbahi, M., Lajnef, M., Derbel, M., Mosbahi, B., Driss, Z., Aricò, C., \& Tucciarelli, T. (2021). Performance improvement of a Savonius water rotor with novel blade shapes. Ocean Engineering, 237, 109611.
\bibitem{H}Mosbahi, M., Derbel, M., Hannachi, M., Mosbahi, B., Driss, Z., Aricò, C., \& Tucciarelli, T. (2023). Performance study of spiral Darrieus water rotor with V-shaped blades. Proceedings of the Institution of Mechanical Engineers, Part C: Journal of Mechanical Engineering Science, 237(21), 4979-4990.
\end{thebibliography}
\end{document}